    \numberwithin{equation}{section}
    \definecolor{plum}  {rgb}{.4,0,.4}
    \definecolor{BrickRed} {rgb}{0.6,0,0}
\def\ddefloop#1{\ifx\ddefloop#1\else\ddef{#1}\expandafter\ddefloop\fi}
\def\ddef#1{\expandafter\def\csname c#1\endcsname{\ensuremath{\mathcal{#1}}}}
\def\ddef#1{\expandafter\def\csname s#1\endcsname{\ensuremath{\mathsf{#1}}}}
\def\ddef#1{\expandafter\def\csname b#1\endcsname{\ensuremath{\mathbf{#1}}}}
\def\Reals{{\mathbb R}}
\def\Naturals{{\mathbb N}}
\def\deq{:=}
\def\eps{\varepsilon}
\def\tp{{\hbox{\textit{\tiny T}}}}
\def\Exp{{\mathbf E}}
    \newtheorem{theorem}{Theorem}[section]
    \newtheorem{lemma}{Lemma}[section]
    \newtheorem{proposition}{Proposition}[section]
    \newtheorem{assumption}{Assumption}[section]
    \newtheorem{remark}{Remark}[section]
\begin{document}

\title{Nonlinear controllability and function representation\\
by neural stochastic differential equations}

\author{Tanya Veeravalli\thanks{University of Illinois; e-mail: {\tt veerava2@illinois.edu}.} \and Maxim Raginsky\thanks{University of Illinois; e-mail: {\tt maxim@illinois.edu}.}}

\date{}

\maketitle

\begin{abstract}%
    
There has been a great deal of recent interest in learning and approximation of functions that can be expressed as expectations of a given nonlinearity with respect to its random internal parameters. Examples of such representations include ``infinitely wide'' neural nets, where the underlying nonlinearity is given by the activation function of an individual neuron. In this paper, we bring this perspective to function representation by neural stochastic differential equations (SDEs). A neural SDE is an It\^o diffusion process whose drift and diffusion matrix are elements of some parametric families. We show that the ability of a neural SDE to realize nonlinear functions of its initial condition can be related to the problem of optimally steering a certain deterministic dynamical system between two given points in finite time. This auxiliary system is obtained by formally replacing the Brownian motion in the SDE by a deterministic control input. We derive upper and lower bounds on the minimum control effort needed to accomplish this steering; these bounds may be of independent interest in the context of motion planning and deterministic optimal control. 

\end{abstract}



\section{Introduction}

There has been a great deal of recent interest in learning and approximation of functions that admit continuous representations of the form
\begin{align}\label{eq:continuous_rep}
	F(x) = \int_\Omega \varphi(x; \omega) \mu(\dif \omega),
\end{align}
where $x$ takes values in a domain $\sX$, $\varphi : \sX \times \Omega \to \Reals$ is a structured nonlinearity parametrized by the elements of a parameter space $\Omega$, and $\mu$ is a probability measure on $\Omega$. For example, the seminal paper of \citet{barron1993universal} on neural net approximation uses Fourier-analytic techniques to express a broad class of continuous functions on a compact set $\sX \subset \Reals^d$ in the form  \eqref{eq:continuous_rep} with $\varphi(x; \omega) = \alpha \sigma(\langle \theta, x \rangle + \beta)$, where $\sigma : \Reals \to \Reals$ is a sigmoidal activation function, $\omega = (\alpha,\theta,\beta) \in \Reals \times \Reals^d \times \Reals$, and the probability measure $\mu$ is obtained from the Fourier transform of $f$.

The attractiveness of such a continuous viewpoint, apart from its considerable abstraction power which will be discussed below, is that it affords efficient finite approximation by sampling. The underlying idea, now commonly referred to as \textit{Maurey's empirical method}, is as follows: Let $\sX$, $\Omega$, and $\varphi$ be given. Fix a probability measure $\pi \in \cP(\sX)$ and let $\cF$ denote the class of all $F : \sX \to \Reals$ that can be expressed as in \eqref{eq:continuous_rep} for some $\mu \in \cP(\Omega)$, such that
\begin{align*}
V_\pi(F) \deq \int_\sX {\rm var}_{\omega \sim \mu}[\varphi(x;\omega)] \pi(\dif x) < \infty.
\end{align*}
Then a simple probabilistic selection argument shows that, for any $F \in \cF$ and any $N \in \Naturals$, there exist $N$ points $\omega_1,\ldots,\omega_N \in \Omega$, such that the function $\hat{F}_N(\cdot) \deq N^{-1}\sum^N_{i=1}\varphi(\cdot; \omega_i)$ satisfies
\begin{align}\label{eq:Maurey_rate}
	\| F - \hat{F}_N \|^2_{L^2(\pi)} \le \frac{V_\pi(F)}{N}
\end{align} 
--- for $N$ randomly sampled points $\omega_1,\ldots,\omega_N \stackrel{{\rm i.i.d.}}{\sim} \mu$, the expectation $\Exp\| F - \hat{F}_N\|^2_{L^2(\pi)} = N^{-1}V_\pi(F)$, so there exists at least one realization $\omega_1,\ldots,\omega_N$, such that \eqref{eq:Maurey_rate} holds. For instance, in the work of \citet{barron1993universal} and in various extensions \citep{Yukich95universal,Gurvits97approximation,Ji2020Neural} the finite approximation $\hat{F}_N$ is a neural net with one hidden layer consisting of $N$ neurons. 

This idea naturally extends to multilayer neural nets \citep{Barron:2018ty,araujo2019mean,weinan2019barron}, where the elements of the finite-dimensional parameter space $\Omega$ are the weights of all the neurons in the net. One can also consider continuum limits of neural nets, such as neural ordinary differential equations (ODEs) \citep{Hirsch89neural,chen2018neural,li2019deep} and stochastic differential equations (SDEs) \citep{Wong_1991,Tzen2019TheoreticalGF,tzen2019neural,pmlr-v108-li20i}. where the parameter space 
$\Omega$ becomes \textit{infinite-dimensional}. For example, as observed by \citet{Weinan2017dynamical,Haber_2018} and others, one can view the $L$-layer ResNet
\begin{align}\label{eq:ResNet}
	x_{\ell+1} = x_\ell + \frac{1}{L}f(x_\ell,w_\ell), \qquad \ell = 0, \ldots, L-1
\end{align}
with $d$-dimensional input $x_0 = x$, $k$-dimensional weights $w_\ell$, and scalar output $F(x) = \langle \alpha, x_L\rangle$ as an Euler discretization of a controlled ODE $\dot{x}(t) = f(x(t),w(t))$ with initial condition $x(0) = x$ and output $F(x) = \langle \alpha, x(1) \rangle$.  Similar analysis can be given for continuum limits of stochastic neural nets \citep{sonoda2019transport,pmlr-v108-peluchetti20a}. In these settings, the continual representation \eqref{eq:continuous_rep} becomes a \textit{path integral}, where $\Omega = C([0,T]; \Reals^k)$ is the space of continuous paths $\omega : [0,1] \to \Reals^k$ and $\mu$ is a probability measure on $\Omega$ that emerges in the limit as $L \to \infty$ under appropriate assumptions on the distributions of the random vectors $(w_\ell)^{L-1}_{\ell = 0}$ for each $L$.

In this paper, we consider a different class of stochastic models that give rise to path-integral function representations. Instead of \eqref{eq:ResNet}, we start with the continuum limit of the $L$-layer net
\begin{align}\label{eq:StNet}
	x_{\ell+1} = x_\ell + \frac{1}{L}f(x_\ell) + \frac{1}{\sqrt{L}}g(x_\ell)w_\ell, \qquad \ell = 0, \dots, L-1
\end{align}
with $d$-dimensional input $x_0 = x$ and scalar output $F(x) = \Exp[\langle \alpha, x_L \rangle]$,  where $w_0,w_1,\ldots$ are i.i.d.\ standard Gaussian random vectors in $\Reals^d$. The vector $\alpha \in \Reals^d$ and the maps $f : \Reals^d \to \Reals^d$ and $g : \Reals^d \to \Reals^{d \times d}$ are fixed. In contrast to \eqref{eq:ResNet}, the $w_\ell$'s in \eqref{eq:StNet} are not the model weights, but rather serve as a source of stochasticity for generating \textit{random} functions of the initial condition $x$. The recursion \eqref{eq:StNet} is readily recognized as an Euler discretization of the It\^o SDE $\dif X_t = f(X_t)\dif t + g(X_t)\dif W_t$ with drift $f$ and diffusion matrix $g$. We are specifically interested in the case when $f$ and $g$ are elements of some parametric class of functions, thus following the formulation of neural SDEs by \citet{Tzen2019TheoreticalGF,tzen2019neural}. Stated informally, our results show that the functions $F$ that can be represented by models of this kind are of the (approximate) form
\begin{align}\label{eq:F_approx}
	F(x) \approx C_1\int_{\Reals^d} \langle \alpha, y \rangle \exp\big(-C_2 I(x,y)\big) \dif y,
\end{align}
where $c_1,c_2$ are some constants and $I(x,y)$ is a certain \textit{minimum action} functional pertaining to the deterministic optimal control problem of steering the state of the dynamical system $\dot{x}(t) = f(x(t)) + g(x(t))u(t)$ from $x$ to $y$ in finite time by choosing a suitable control $u(\cdot)$. This connection between the existence of densities of diffusion processes and nonlinear controllability  goes back to the pioneering work of \citet{Elliott69thesis}; \citet{sheu1991some} gave sharp quantitative estimates involving $I(x,y)$. Conceptually, the representation of the transition densities of a diffusion process via the minimum action $I(x,y)$ is related to Feynman's path integral approach to quantum mechanics and its classical formulation via optimal control \citep{Fleming83mechanics,Guerra:1983gd}. On the other hand, the representation of functions realizable by neural SDEs in the form \eqref{eq:F_approx} allows us to both characterize their structure and quantify the rates of approximation of such functions by finite sums via Maurey's method. Moreover, some of our results (in particular, upper and lower bounds on the minimum action $I(x,y)$) may be of independent interest in the context of optimal control.

\paragraph{Notation.} The Euclidean inner product and norm on $\Reals^d$ will be denoted by $\langle\cdot,\cdot \rangle$ and by $|\cdot|$, respectively; the spectral norm of a $d \times d$ matrix by $\|\cdot\|$. The $d \times d$ identity matrix is denoted by $I_d$. The Jacobian of a differentiable map $f : \Reals^d \to \Reals^d$ will be denoted by $f_*$. The smallest and the largest eigenvalues of a symmetric matrix will be denoted by $\lambda_{\min}(\cdot)$ and $\lambda_{\max}(\cdot)$ respectively.

\section{The set-up and some background}
\label{sec:setup}

We consider the case when the real-valued function $F(x)$ of a vector-valued $x \in \Reals^d$ is given by a linear functional of the state of an It\^o diffusion process, i.e., $F(x) = \Exp[Z_T|X_0 = x]$ where
\begin{subequations}
\begin{align}
	\dif X_t &= f(X_t)\dif t + g(X_t) \dif W_t, \qquad X_0 = x;\, 0 \le t \le T \label{eq:SDE}\\
	Z_t &= \langle \alpha, X_t \rangle. \label{eq:readout}
\end{align}
\end{subequations}
Here, $T < \infty$ is a fixed time horizon, $(W_t)_{t \in [0,T]}$ is a standard $d$-dimensional Brownian motion, $f : \Reals^d \to \Reals^d$ is the drift, $g : \Reals^d \to \Reals^{d\times d}$ is the diffusion matrix, and $\alpha \in \Reals^d$ is a fixed vector. To obtain a representation of $F$ in the form of \eqref{eq:continuous_rep}, we take $\Omega = C([0,T];\Reals^d)$, the space of continuous paths $\omega : [0,T] \to \Reals^d$; $\mu$ the Wiener measure, under which $(W_t(\omega))_{t \in [0,T]}$ with $W_t(\omega) \deq \omega(t)$ is the standard $d$-dimensional Brownian motion; and $\varphi(x; \omega) \deq \langle \alpha, X_1(x, \omega) \rangle$, where $X_t(x,\omega)$ is the solution of \eqref{eq:SDE} at time $t \ge 0$ with the initial condition $X_0 = x$. With these definitions, we  have
\begin{align}\label{eq:path_integral}
	F(x) = \Exp[Z_1|X_0 = x] = \int_\Omega \varphi(x; \omega)\mu(\dif\omega).
\end{align}
This representation of $F$ as a path integral with respect to the Wiener measure in \eqref{eq:path_integral}, while succinct, is not readily amenable to analysis. However, we can reduce the analysis to finite-dimensional integration if the diffusion process \eqref{eq:SDE} is regular enough to admit transition densities. One such set of regularity assumptions, while by no means the most general, is sufficient for our purposes:

\begin{assumption}\label{as:Sheu} The drift $f(x)$ and the diffusion matrix $g(x)$ satisfy the following:
	\begin{enumerate}
		\item Both $f(\cdot)$ and $a(\cdot) \deq g(\cdot)g(\cdot)^\tp$ are  Lipschitz-continuous.
		\item The diffusion is uniformly elliptic, i.e.,  there exist constants $\lambda_1 \ge \lambda_0 > 0$, such that $\lambda_0 \le \lambda_{\min}(a(x)) \le \lambda_{\max}(a(x)) \le \lambda_1$ for all $x \in \Reals^d$.
	\end{enumerate}
\end{assumption}
\noindent Under these assumptions, there exists a family of functions $p_t : \Reals^d \times \Reals^d \to [0,\infty)$, such that each $p_t(x,\cdot)$ is a probability density, and for any bounded measurable function $h : \Reals^d \to \Reals$ we have
\begin{align*}
	\Exp[h(X_t)|X_0 = x] = \int_{\Reals^d} h(y)p_t(x,y) \dif y, \qquad x \in \Reals^d.
\end{align*}
In fact, more can be said about the regularity of $p_t(x,y)$ as a function of $t$, $x$, and $y$, stemming from the fact that it is the fundamental solution of the parabolic PDE
\begin{align*}
	\frac{\partial}{\partial t} \rho(t,x) = \langle f(x), \nabla_x \rho(t,x) \rangle + \frac{1}{2} {\rm tr}[a(x)\nabla^2_x \rho(t,x)], \qquad t \ge 0, x \in \Reals^d
\end{align*}
i.e., $p_t(x,y) = \rho(t,x)$ with the initial condition $\rho(0,x) = \delta(x-y)$. At any rate, we can now express $F(x)$ in terms of $p_T(x,y)$ as
\begin{align}\label{eq:finite_dim_integral}
	F(x) = \int_{\Reals^d} \langle \alpha, y \rangle p_T(x,y) \dif y
\end{align}
and investigate the questions of expressiveness and rates of approximation by analyzing the structure of $p_t(x,y)$. For instance, classical results in the theory of parabolic PDEs give Gaussian estimates
\begin{align*}
\frac{C_1}{t^{d/2}}\exp\left(-\frac{c_1|y-x|^2}{t}\right) - \frac{C_2}{t^{(d-\kappa)/2}}\exp\left(-\frac{c_2|y-x|^2}{t}\right) \le p_t(x,y) \le \frac{C_3}{t^{d/2}}\exp\left(-\frac{c_3|y-x|^2}{t}\right),
\end{align*} 
where $C_1,C_2,C_3,c_1,c_2,c_3,\kappa$ are some positive constants. (See \citet{Stroock2008PDEs} for these results and relevant background.) However, these bounds are not suitable for our purposes because the dependence of these constants on various parameters (such as the dimension $d$) may not be optimal and because we are specifically interested in the capability of neural SDEs to represent nonlinear functions of $x$. For this reason, we appeal to more fine-grained estimates due to Sheu:

\begin{theorem}[\citet{sheu1991some}]\label{thm:Sheu} Under Assumption~\ref{as:Sheu}, the transition densities $p_T(x,y)$ of \eqref{eq:SDE} can be bounded below and above as
	\begin{align}\label{eq:Sheu_bounds}
		\frac{k_1}{\sqrt{(2\pi T)^d \det a(y)}}  \exp\left(-c_1 I_T(x,y)\right) \le p_T(x,y) \le \frac{k_2}{(\sqrt{(2\pi T)^d \det a(y)}}  \exp\left(-c_2 I_T(x,y)\right)
	\end{align}
	where $k_1,k_2,c_1,c_2$ are polynomial in $d$, $\lambda_1/\lambda_0$, $T$, and the Lipschitz constants of $f$ and $a$, and $I_T(x,y)$ is the minimum action functional for the following deterministic control problem:
	\begin{align}
		I_T(x,y) &\deq \min_{u(\cdot)} \frac{1}{2}\int^T_0 |u(t)|^2 \dif t \nonumber\\
		\text{subject to }& \dot{x}(t) = f(x(t)) + g(x(t))u(t) \label{eq:minimum_action} \\
		&x(0) = x, x(T) = y \nonumber
	\end{align}
\end{theorem}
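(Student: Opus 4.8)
The theorem is due to \citet{sheu1991some}; here is the strategy I would follow to prove it. Under Assumption~\ref{as:Sheu} the matrix $g(x)$ is invertible with $\|g(x)^{-1}\|^2 \le 1/\lambda_0$, so in \eqref{eq:minimum_action} each admissible trajectory determines the control uniquely via $u(t) = g(x(t))^{-1}(\dot{x}(t) - f(x(t)))$, and
\[
I_T(x,y) = \min_{x(0)=x,\, x(T)=y}\ \frac12\int_0^T \bigl\langle a(x(t))^{-1}(\dot{x}(t) - f(x(t))),\, \dot{x}(t)-f(x(t))\bigr\rangle\,\dif t .
\]
This is the Freidlin--Wentzell action of \eqref{eq:SDE}, and the value function $I_t(z,y)$, viewed as a function of the starting point $z$ and the time-to-go $t$, solves (in the viscosity sense) the Hamilton--Jacobi--Bellman equation $\partial_t I = \langle f(z), \nabla_z I\rangle - \frac12\langle\nabla_z I, a(z)\nabla_z I\rangle$ with the singular datum $I_0(\cdot,y)=0$ at $z=y$ and $+\infty$ elsewhere. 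The proof is then a quantitative comparison between $p_t(\cdot,y)$, which solves the linear parabolic equation displayed just before the theorem, and the geometric-optics ansatz $q_t(z) \deq \kappa\,\bigl((2\pi t)^d\det a(y)\bigr)^{-1/2}\exp(-c\,I_t(z,y))$; note that $I_t(z,y) \approx \frac1{2t}\langle a(y)^{-1}(z-y), z-y\rangle$ for $z$ near $y$, so near the diagonal $q_t$ is the exact Gaussian kernel of the coefficient-frozen diffusion.

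\emph{Upper bound.} I would use an exponential change of measure. Fix $y$, let $\varphi:[0,T]\to\Reals^d$ be a near-minimizing trajectory in \eqref{eq:minimum_action} with control $v(\cdot)$, and apply Girsanov's theorem to tilt the Brownian motion by $v$; under the new measure $X$ concentrates around $\varphi$, and the Radon--Nikodym factor contributes exactly $\exp\bigl(-\frac12\int_0^T|v(t)|^2\dif t\bigr)$, i.e.\ $\exp(-I_T(x,y))$ up to the near-minimality slack. It remains to estimate the density at the origin of the fluctuation $Y_t = X_t - \varphi(t)$, whose drift is linear in $Y$ with coefficient governed by $\mathrm{Lip}(f)$ and whose diffusion matrix is within $\mathrm{Lip}(a)$ of $a(\varphi(t))$; a Gaussian upper bound for this density produces the prefactor $\kappa_2\bigl((2\pi T)^d\det a(y)\bigr)^{-1/2}$, with $\kappa_2$ polynomial in $d$, $\lambda_1/\lambda_0$, $T$ and the Lipschitz constants. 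The constant $c_2$ departs from $1$ because passing from the quadratic fluctuation term to the action itself costs a Cauchy--Schwarz/Jensen factor. Equivalently, one can verify directly from the Hamilton--Jacobi--Bellman equation and the Lipschitz bounds that $q_t$ is a supersolution, $(\partial_t - L)q_t \ge 0$ with $L = \langle f,\nabla\rangle + \frac12\mathrm{tr}(a\nabla^2)$, and invoke the parabolic maximum principle against $p_t(\cdot,y)$.

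\emph{Lower bound.} For the lower bound I would reverse the comparison: tilt along a near-optimal path and bound $p_T(x,y)$ below by the tilted probability of landing in a small ball around $y$ times the least Girsanov density on that event; or check that for suitably smaller $\kappa_1,c_1$ the ansatz $q_t$ is a subsolution, $(\partial_t - L)q_t \le 0$, and conclude by comparison; or chain, $p_T(x,y)\ge\int\cdots\int\prod_{i=1}^n p_{T/n}(z_{i-1},z_i)\,\dif z_1\cdots\dif z_{n-1}$ along a near-optimal trajectory, bounding each short-time factor below by a crude local Gaussian estimate and comparing the sum of local actions to $I_T(x,y)$. The factor $c_1 < 1$ reflects that a subsolution must absorb the ``transport'' error term $\frac12\mathrm{tr}(a\nabla^2_z I)$ generated by the non-constant coefficients.

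\emph{Main obstacle.} The conceptual skeleton above is classical --- it is the rigorous, non-asymptotic version of Varadhan's short-time heat-kernel asymptotics in the presence of drift. The actual content, and the reason the statement is nontrivial, is entirely quantitative: every estimate must be carried out using \emph{only} the ellipticity constants $\lambda_0,\lambda_1$ and the Lipschitz constants of $f$ and $a$, with no appeal to compactness or higher regularity, so that $k_1,k_2,c_1,c_2$ come out polynomial in $d$, $\lambda_1/\lambda_0$, $T$, $\mathrm{Lip}(f)$, $\mathrm{Lip}(a)$ rather than depending on them uncontrollably as in the classical parabolic bounds quoted before the theorem. Two points need particular care: (i) the value function $I_t(z,y)$ need not be $C^2$ (it develops a cut locus), so the maximum-principle steps have to be run for viscosity solutions or after an inf-convolution regularization whose error is tracked; and (ii) the $t\downarrow 0$ singularity has to be handled by matching $q_t$ to the frozen-coefficient Gaussian uniformly in $d$, which is what pins down the $(\det a(y))^{-1/2}$ prefactor. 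I expect (ii) --- making the near-diagonal matching uniform with the correct dimensional dependence --- to be the most delicate part.
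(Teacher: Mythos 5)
The paper does not prove this theorem: it is imported verbatim from \citet{sheu1991some}, so there is no internal proof to compare your proposal against. Judged on its own terms, your roadmap is consistent with how such two-sided Aronson-type bounds with the Freidlin--Wentzell action in the exponent are actually established. The reformulation of $I_T(x,y)$ as $\min \frac12\int_0^T\langle a(x(t))^{-1}(\dot{x}(t)-f(x(t))),\dot{x}(t)-f(x(t))\rangle\,\dif t$ is correct (and is essentially how Sheu introduces the functional), your Hamilton--Jacobi--Bellman equation for the time-to-go value function has the right form, and the three mechanisms you list --- Girsanov tilting along a near-minimizer, super/subsolution comparison against a geometric-optics ansatz, and Chapman--Kolmogorov chaining of short-time local Gaussian estimates --- are all genuine ingredients of proofs of this type. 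Sheu's own route is closest to your maximum-principle variant: he works with the logarithmic transformation $J=-\log p$, which satisfies a second-order HJB-type equation admitting a stochastic control representation, and extracts the bounds from that representation; your Girsanov formulation is the probabilistic shadow of the same idea.

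That said, what you have written is a strategy with the hard parts named rather than executed, and you correctly identify where the content lies: the theorem's entire value for this paper is the \emph{quantitative} claim that $k_1,k_2,c_1,c_2$ depend only polynomially on $d$, $\lambda_1/\lambda_0$, $T$, and the Lipschitz constants of $f$ and $a$. None of your three mechanisms yields that for free --- the comparison-principle route needs one-sided second-derivative (semiconcavity) bounds on $I_t(\cdot,y)$ with explicit constants to control the $\frac12\mathrm{tr}(a\nabla^2 I)$ error, the Girsanov route needs to control the stochastic-integral term $\int v\,\dif W$ in the Radon--Nikodym density (not only the $\frac12\int|v|^2\dif t$ term you account for), and the chaining route needs the sum of local actions to reconstruct $I_T(x,y)$ up to a multiplicative, not additive, loss. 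Since the authors themselves treat this as a black box, the appropriate conclusion is that your proposal is a plausible and well-oriented sketch of the cited result, not a substitute for its proof.
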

\noindent Sheu's estimates sharpen and improve earlier results, which are primarily asymptotic and apply to small $T > 0$, small Euclidean (or Riemannian) distance between $x$ and $y$, and small noise. For instance, \citet{Kifer76density} obtains the following Taylor-like expansion under the replacement $g \to \eps g$ for a small $\eps > 0$  under further smoothness conditions on $f,g$, and under the above-mentioned restrictions on $T$, $x$, and $y$:
\begin{align}\label{eq:Kifer}
\begin{split}
	p_T(x,y) &= \frac{1}{\sqrt{(2\pi \eps^2)^{d/2}}} \exp\left(-\frac{1}{2\eps^2}I_T(x,y)\right) \\
	& \quad \cdot \left(K_0(T,x,y) + K_1(T,x,y)\eps^2 + \ldots + K_m(T,x,y)\eps^{2m} + o(\eps^{2m})\right), \quad m = 0, 1, \ldots
\end{split}
\end{align}
where the coefficients $K_i(T,x,y)$ are given by an explicit recursion. The main point here is that, to a reasonable approximation, $p_T(x,y)$ is inversely proportional to the exponential of the minimum action $I_T(x,y)$ (with a multiplicative constant).

\subsection{The role of finite-time nonlinear controllability}

Another lens through which we can view the above observation is that the bounds of Sheu are inversely proportional to the exponential of the minimal ``control energy'' needed to transfer the state of the \textit{deterministic} control-affine system
\begin{align}\label{eq:control_affine}
	\dot{x}(t) = f(x(t)) + g(x(t))u(t)
\end{align}
from $x(0) = x$ to $x(T) = y$. (This system is obtained from \eqref{eq:SDE} by formally replacing the Brownian motion $W$ with a deterministic input $u$.) Such a relationship between the form of the transition density of a diffusion process and deterministic optimal control is, in fact, exact for linear SDEs with $f(x) = Ax$ and $g(x)=G$ for some matrices $A,G \in \Reals^{d \times d}$ \citep{Brockett76nonlinear}. In this case, the transition densities are Gaussian,
\begin{align*}
	p_t(x,y) = \frac{1}{\sqrt{(2\pi t)^d \det W(t)}} \exp\left(-\frac{1}{2}\langle y-e^{tA}x, W(t)^{-1}(y-e^{tA}x)\rangle\right),
\end{align*} 
where the covariance matrix
\begin{align*}
	W(t) = \int^t_0 e^{sA}GG^\tp (e^{sA})^\tp\dif s
\end{align*}
is the time-$t$ controllability Gramian of the linear time-invariant system $\dot{x}(t) = Ax(t) + Gu(t)$ \citep{Brockett1970llinear}. Moreover, the quantity appearing in the exponent is (minus) the minimum action $I_T(x,y)$ defined in \eqref{eq:minimum_action} for this system. Assumption~\ref{as:Sheu} will be satisfied if $GG^\tp$ is positive definite, in which case the controllability Gramian is nonsingular for all $t$. On the qualitative side, a classic result of \cite{Elliott69thesis} says that the complete controllability of the deterministic system \eqref{eq:control_affine} is sufficient for the existence of smooth transition densities of the It\^o diffusion \eqref{eq:SDE}; moreover, \cite{Clark73SDEs} showed that it is also necessary. Remarkably, the result of Sheu (Theorem~\ref{thm:Sheu}) provides a sharp \textit{quantitative statement} of this equivalence for uniformly elliptic diffusions, phrased in terms of the minimum action functional $I_T(x,y)$. (Note that the system \eqref{eq:control_affine} is completely controllable since $g(x)$ has full rank for each $x$ by virtue of Assumption~\ref{as:Sheu}, cf.~for example \citet[Sec.~4.3]{Sontag1998control}).

The importance of this result for our purposes is twofold: First, since Eq.~\eqref{eq:finite_dim_integral} expresses the function $F(x)$ as an expectation of the linear form $\langle \alpha, \cdot \rangle$ w.r.t.\ the transition density $p_T(x,\cdot)$, we conclude that the capability of neural SDEs to represent nonlinear functions hinges on the extent to which the minimum action $I_T(x,y)$ differs from a quadratic form like $\frac{1}{2}\langle y-L(T)x, \Sigma(T)^{-1}(y-L(T)x)\rangle$ for some  $L(T) \in \Reals^{d \times d}$ and a symmetric positive-definite $\Sigma(T) \in \Reals^{d \times d}$. Second, tight upper and lower bounds on $I_T(x,y)$ translate into estimates of the constant $V_\pi(F)$ that controls the dependence of the rate of approximation of $F$ by finite sums, in particular in terms of the dimension $d$ and other model parameters. We turn to the derivation of such bounds next.
 
\section{Upper and lower bounds on the minimum action functional}
\label{sec:I_bounds}

In this section, we obtain upper and lower bounds on the minimum action $I_T(x,y)$, which we need in order to estimate the transition density $p_T(x,y)$. Finite-time optimal transfer of a controlled dynamical system between a prescribed pair of initial and final states is a fundamental primitive in various settings, such as motion planning in robotics \citep{LiCanny1993nonholonomic} or the theory of optimal synthesis \citep{Piccoli_2000}, so these bounds may be of independent interest.

\subsection{Upper bound via feedback linearization}

To obtain an upper bound on $I_T(x,y)$, it suffices to consider any suboptimal control that transfers \eqref{eq:continuous_rep} from $x$ to $y$ in time $T$. A particularly simple way to do this is by \textit{feedback linearization} \citep[Section~5.3]{Sontag1998control}: We use a state feedback control of the form $u = k(x,v)$ so that the closed-loop system $\dot{x}(t) = f(x(t))+g(x(t))k(x(t),v(t))$ becomes, possibly after a smooth and invertible change of coordinates, a completely controllable linear system with input $v(\cdot)$. In our case, since $g(x) \in \Reals^{d \times d}$ is invertible by Assumption~\ref{as:Sheu}, we can take $k(x,v) = g(x)^{-1}(v-f(x))$, resulting in $\dot{x}(t)=v(t)$. This system is evidently completely controllable and, in particular, can be steered from $x(0)=x$ to $x(T)=y$ by means of the constant control $v(t) = \frac{y-x}{T}$. Using this together with uniform ellipticity, we obtain the following simple estimate:

\begin{proposition}
	\begin{align*}
		I_T(x,y) \le \frac{1}{2\lambda_0} \int^T_0 \Big|\frac{y-x}{T} - f\Big(x + \frac{t}{T}(y-x)\Big)\Big|^2 \dif t.
	\end{align*}
\end{proposition}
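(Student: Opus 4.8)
The plan is to exhibit a single suboptimal control feasible for the minimum-action problem \eqref{eq:minimum_action} and bound its cost directly. Following the feedback-linearization idea described above, I would take the state-feedback law $u(t) = k(x(t), v(t))$ with $k(x,v) = g(x)^{-1}(v - f(x))$ and the constant reference input $v(t) \equiv (y-x)/T$. Since $g(x)$ is invertible for every $x$ (uniform ellipticity in Assumption~\ref{as:Sheu} forces $a(x) = g(x)g(x)^\tp$ to be positive definite, hence $g(x)$ to be nonsingular), this feedback is well-defined, and the closed-loop dynamics reduce to $\dot x(t) = v(t) = (y-x)/T$. The unique solution with $x(0) = x$ is the constant-speed segment $x(t) = x + \tfrac{t}{T}(y-x)$, which is absolutely continuous and satisfies $x(T) = y$; the associated open-loop control $t \mapsto u(t) = g(x(t))^{-1}\big(\tfrac{y-x}{T} - f(x(t))\big)$ is continuous, hence square-integrable on $[0,T]$. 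Thus $u(\cdot)$ is feasible for \eqref{eq:minimum_action}, and by minimality $I_T(x,y) \le \tfrac{1}{2}\int^T_0 |u(t)|^2 \dif t$.

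It then remains to bound the integrand pointwise. For any vector $w \in \Reals^d$ one has $|g(x)^{-1} w|^2 \le \|g(x)^{-1}\|^2 |w|^2$, and $\|g(x)^{-1}\|^2 = \lambda_{\max}\big((g(x)^{-1})^\tp g(x)^{-1}\big) = \lambda_{\max}\big((g(x)g(x)^\tp)^{-1}\big) = 1/\lambda_{\min}(a(x)) \le 1/\lambda_0$ by the ellipticity bound. Applying this with $w = \tfrac{y-x}{T} - f(x(t))$ and base point $x(t)$ gives $|u(t)|^2 \le \tfrac{1}{\lambda_0}\big|\tfrac{y-x}{T} - f\big(x + \tfrac{t}{T}(y-x)\big)\big|^2$, and integrating over $[0,T]$ yields the claimed inequality.

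There is essentially no obstacle here; the only points that merit a word of care are (i) verifying that the feedback law is legitimate, i.e., that $g(x)^{-1}$ exists globally, which is immediate from uniform ellipticity, and (ii) the identity $\|g^{-1}\|^2 = 1/\lambda_{\min}(gg^\tp)$, which follows from $(g^{-1})^\tp g^{-1} = (gg^\tp)^{-1}$ together with the facts that the squared spectral norm is the largest eigenvalue of the Gram matrix and that the eigenvalues of an inverse are the reciprocals of the original eigenvalues. One could equivalently phrase the whole argument through the variational representation of $I_T$ over absolutely continuous curves noted in the commented-out passage, choosing $\gamma(t) = x + \tfrac{t}{T}(y-x)$; the computation is identical.
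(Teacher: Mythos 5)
Your argument is correct and is exactly the route the paper takes: feedback linearization with $k(x,v)=g(x)^{-1}(v-f(x))$ and the constant reference $v\equiv(y-x)/T$, yielding the straight-line trajectory, followed by the uniform-ellipticity bound $\|g(x)^{-1}\|^2\le 1/\lambda_0$. The paper omits the (straightforward) details you spell out, but there is no substantive difference.
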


\subsection{Lower bound via nonlinear variation of parameters}
\label{ssec:var_param}

To obtain a lower bound on $I_T(x,y)$, we fix an arbitrary control $u : [0,T] \to \Reals^d$ and consider two dynamical systems, one with control and one without:
\begin{align*}
	\dot{x}(t) = f(x(t)) + g(x(t))u(t) \qquad \text{and} \qquad \dot{x}(t) &= f(x(t)).
\end{align*}
For $t \ge s$, denote by $\varphi^u_{s,t}(x)$ the state of the first system at time $t$ starting from $x(s)=x$, and define $\varphi_{s,t}(x)$ similarly for the second system. Since the system $\dot{x}(t)=f(x(t))$ is time-invariant, we have $\varphi_{s,t}(\cdot) = \varphi_{0,t-s}(\cdot)$, so we can write $\varphi_{t-s}(\cdot)$ instead. The solutions of the two systems at time $T$ are related by the nonlinear variation-of-parameters formula \citep[p.~96]{Hairer1993ODEs}
\begin{align}\label{eq:var_params}
	\varphi^u_{0,T}(x) - \varphi_T(x) = \int^T_0 (\varphi_{T-t})_*(\varphi^u_{0,t}(x)) g(\varphi^u_{0,t}(x)) u(t) \dif t,
\end{align}
where $(\varphi_{T-t})_*$ denotes the Jacobian of the flow map $\varphi_{T-t}$. We then have the following:

\begin{proposition}\label{prop:I_lower_bound} Under Assumption~\ref{as:Sheu},
	\begin{align}\label{eq:I_lower_bound}
		I_T(x,y) \ge \frac{1}{2\lambda_1 S_T(f)} |y - \varphi_T(x)|^2,
	\end{align}
	where 
	\begin{align}\label{eq:stability_T}
		S_T(f) \deq   \int^T_0 \sup_{\bar{x}\in \Reals^d} \|(\varphi_{T-t})_*(\bar{x})\|^2\dif t.
	\end{align}
\end{proposition}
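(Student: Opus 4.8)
The plan is to take an arbitrary control $u:[0,T]\to\Reals^d$ feasible for \eqref{eq:minimum_action} --- i.e., one with $\varphi^u_{0,T}(x)=y$ --- and extract a lower bound on $\tfrac12\int_0^T|u(t)|^2\dif t$ directly from the variation-of-parameters identity \eqref{eq:var_params}. For such a $u$, the left-hand side of \eqref{eq:var_params} is exactly $y-\varphi_T(x)$, so taking Euclidean norms, using the triangle inequality for integrals, and using submultiplicativity of the spectral norm gives
\begin{align*}
	|y-\varphi_T(x)| \le \int_0^T \big\|(\varphi_{T-t})_*(\varphi^u_{0,t}(x))\big\|\,\big\|g(\varphi^u_{0,t}(x))\big\|\,|u(t)|\dif t.
\end{align*}

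Next I would bound the two state-dependent factors in the integrand. The diffusion factor is handled by uniform ellipticity: for every $z$ we have $\|g(z)\|^2=\lambda_{\max}(g(z)g(z)^\tp)=\lambda_{\max}(a(z))\le\lambda_1$, so $\|g(\varphi^u_{0,t}(x))\|\le\sqrt{\lambda_1}$ uniformly in $t$. The Jacobian factor is bounded pointwise by its worst case over all states, $\|(\varphi_{T-t})_*(\varphi^u_{0,t}(x))\|\le\sup_{\bar x}\|(\varphi_{T-t})_*(\bar x)\|$. Inserting these bounds and applying Cauchy--Schwarz to the time integral yields
\begin{align*}
	|y-\varphi_T(x)| &\le \sqrt{\lambda_1}\left(\int_0^T\sup_{\bar x}\|(\varphi_{T-t})_*(\bar x)\|^2\dif t\right)^{1/2}\left(\int_0^T|u(t)|^2\dif t\right)^{1/2} \\
	&= \sqrt{\lambda_1 S_T(f)}\left(\int_0^T|u(t)|^2\dif t\right)^{1/2},
\end{align*}
by the definition \eqref{eq:stability_T} of $S_T(f)$. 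Squaring, rearranging to isolate $\tfrac12\int_0^T|u(t)|^2\dif t$, and taking the infimum over all feasible controls $u$ gives \eqref{eq:I_lower_bound}.

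The computation itself is short, so the only point needing care is the legitimacy of invoking \eqref{eq:var_params}: one needs the uncontrolled flow map $\varphi_{T-t}$ to be differentiable in its initial condition so that $(\varphi_{T-t})_*$ is meaningful, and one needs $S_T(f)$ to be finite for the bound to be non-vacuous. The former follows from the Lipschitz hypothesis of Assumption~\ref{as:Sheu} together with standard smoothness-of-flow results (and if one insists on full rigor under mere Lipschitz regularity, $(\varphi_{T-t})_*$ exists for a.e.\ initial condition by Rademacher's theorem, which is all the argument uses); the latter is automatic, since a Grönwall estimate bounds $\sup_{\bar x}\|(\varphi_s)_*(\bar x)\|$ by $e^{Ls}$ with $L$ a Lipschitz constant of $f$, whence $S_T(f)\le(e^{2LT}-1)/(2L)<\infty$. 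I do not expect a genuine obstacle here: the proposition is essentially the norm inequality plus Cauchy--Schwarz applied to \eqref{eq:var_params}, with uniform ellipticity supplying the control on the diffusion matrix.
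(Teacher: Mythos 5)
Your proof is correct and takes essentially the same route as the paper's: both start from the variation-of-parameters identity \eqref{eq:var_params} with $\varphi^u_{0,T}(x)=y$, apply the Cauchy--Schwarz inequality in time, and use uniform ellipticity ($a \preceq \lambda_1 I_d$) to control the diffusion factor. The only cosmetic difference is that you bound $\|g\|\le\sqrt{\lambda_1}$ and the Jacobian norm pointwise before applying Cauchy--Schwarz, whereas the paper applies Cauchy--Schwarz first and then extracts $\lambda_1$ and the supremum; your closing remarks on differentiability of the flow and finiteness of $S_T(f)$ are careful additions the paper leaves implicit.
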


\begin{proof} Choose any control $u(\cdot)$ that transfers $x$ to $y$ in time $T$ subject to $\dot{x}(t) = f(x(t)) + g(x(t))u(t)$. Since we then have $\varphi^u_{0,T}(x)=y$, using the Cauchy--Schwarz inequality in \eqref{eq:var_params} gives
	\begin{align*}
		|y - \varphi_T(x)|^2 \le \left(\int^T_0 \|(\varphi_{T-t})_*(\varphi_{0,t}(x))\|^2 \dif t\right)   \left(\int^T_0 |g(\varphi^u_{0,t}(x)) u(t)|^2 \dif t \right).
	\end{align*}
Moreover, because $a(\cdot) = g(\cdot)g(\cdot)^\tp \preceq \lambda_1 I_d$, we can further upper-bound this by
	\begin{align*}
		|y - \varphi_T(x)|^2 \le \lambda_1 \cdot  \int^T_0 \sup_{\bar{x} \in \Reals^d} \|(\varphi_{T-t})_*(\bar{x})\|^2 \dif t \cdot \int^T_0 |u(t)|^2 \dif t.
	\end{align*}
	Rearranging and using the definition of $I_T(x,y)$ gives \eqref{eq:I_lower_bound}.
\end{proof}
\noindent Note that the quantity $S_T(f)$ defined in \eqref{eq:stability_T} depends only on the drift $f$ and pertains to stability properties of the the autonomous dynamics $\dot{x}(t) = f(x(t))$. In order to instantiate the lower bound \eqref{eq:I_lower_bound}, we need estimates on $S_T(f)$. To that end, we will make use of the fact that the Jacobian $(\varphi_t)_*(\bar{x})$ of the flow map $\varphi_t$ is the solution $\Lambda(t)$ at time $t$ of the variational equation
\begin{align}\label{eq:vareq}
	\dot{\Lambda}(t) = f_*(\varphi_{t}(\bar{x})) \Lambda(t), \qquad t \ge 0
\end{align}
with the initial condition $\Lambda(0) = I_d$ \citep[Section~2.8]{Sontag1998control}. 

\begin{assumption}\label{as:matmeas} The drift $f$ is such that
	\begin{align*}
		M(f) \deq \sup_{x \in \Reals^d}\lambda_{\max}\Bigg(\frac{f_*(x)+f_*(x)^\tp}{2}\Bigg) < \infty.
	\end{align*}
\end{assumption}
\begin{remark}{\em Note that the quantity $M(f)$ can be positive, negative, or zero.}\end{remark}

\begin{proposition}\label{prop:ST_bound} Under Assumption~\ref{as:matmeas},
	\begin{align}
		S_T(f) \le \begin{cases}
	\frac{1}{2|M(f)|}{e^{2 \max\{M(f),0\} T}} , & M(f) \neq 0 \\
		T, & M(f) = 0
		\end{cases}.
	\end{align}
\end{proposition}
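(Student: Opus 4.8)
The plan is to reduce everything to a pointwise bound on the operator norm of the flow Jacobian via the variational equation \eqref{eq:vareq}, together with the matrix-measure (logarithmic-norm) estimate encoded in Assumption~\ref{as:matmeas}; the claimed bounds on $S_T(f)$ then follow by an elementary integration and a short case analysis on the sign of $M(f)$.

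First I would fix an arbitrary base point $\bar{x} \in \Reals^d$ and an arbitrary unit vector $v \in \Reals^d$, and set $w(t) \deq \Lambda(t)v$, where $\Lambda(t) = (\varphi_t)_*(\bar{x})$ solves \eqref{eq:vareq} with $\Lambda(0) = I_d$. Differentiating $|w(t)|^2$ along the trajectory gives
\[
	\frac{\dif}{\dif t}|w(t)|^2 = 2\langle w(t), f_*(\varphi_t(\bar{x})) w(t)\rangle = \big\langle w(t), \big(f_*(\varphi_t(\bar{x})) + f_*(\varphi_t(\bar{x}))^\tp\big) w(t)\big\rangle \le 2M(f)\,|w(t)|^2,
\]
where the last step is exactly Assumption~\ref{as:matmeas}. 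Grönwall's inequality then yields $|w(t)|^2 \le e^{2M(f)t}|v|^2 = e^{2M(f)t}$. Taking the supremum over unit $v$ gives $\|(\varphi_t)_*(\bar{x})\|^2 \le e^{2M(f)t}$, and since $\bar{x}$ was arbitrary, $\sup_{\bar{x} \in \Reals^d}\|(\varphi_t)_*(\bar{x})\|^2 \le e^{2M(f)t}$ for every $t \ge 0$.

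Next I would substitute this bound into the definition \eqref{eq:stability_T} and change variables $s = T - t$:
\[
	S_T(f) \le \int^T_0 e^{2M(f)(T-t)}\dif t = \int^T_0 e^{2M(f)s}\dif s.
\]
If $M(f) = 0$ this integral equals $T$. If $M(f) \neq 0$ it equals $\frac{e^{2M(f)T}-1}{2M(f)}$; for $M(f) > 0$ this is at most $\frac{e^{2M(f)T}}{2M(f)} = \frac{1}{2|M(f)|}e^{2\max\{M(f),0\}T}$, while for $M(f) < 0$ the quantity $1 - e^{2M(f)T}$ lies in $(0,1)$, so the integral equals $\frac{1 - e^{2M(f)T}}{2|M(f)|} \le \frac{1}{2|M(f)|} = \frac{1}{2|M(f)|}e^{2\max\{M(f),0\}T}$. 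Both regimes are thus subsumed by the stated bound.

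There is no genuine obstacle here: the only substantive ingredient is the differential inequality for $|w(t)|^2$, which is precisely where Assumption~\ref{as:matmeas} is used, and the rest is the change of variables plus the elementary sign analysis. The one point worth stating explicitly is that the Grönwall estimate is uniform in $\bar{x}$ because $M(f)$ is defined as a supremum over all of $\Reals^d$, so the bound on $\|(\varphi_t)_*(\bar{x})\|$ does not depend on which trajectory $\varphi_t(\bar{x})$ one differentiates along; this is what allows the supremum over $\bar{x}$ to be carried inside the time integral in \eqref{eq:stability_T}.
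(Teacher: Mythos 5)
Your proof is correct and follows essentially the same route as the paper: both arguments reduce to the pointwise bound $\|(\varphi_t)_*(\bar{x})\| \le e^{M(f)t}$ via the variational equation and then integrate with a sign analysis on $M(f)$. The only difference is that you derive this bound directly from the differential inequality for $|w(t)|^2$ plus Gr\"onwall, whereas the paper cites Coppel's inequality for the logarithmic norm --- your argument is just a self-contained proof of that lemma in the Euclidean case, and your explicit treatment of the $M(f)<0$ case is if anything slightly more careful than the paper's.
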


\begin{proof} In order to upper-bound $S_T(f)$, we need uniform control on the operator norms $\| (\varphi_t)_*(\bar{x})\|$ for all $\bar{x} \in \Reals^d$. To that end, we make use of the \textit{measure} (or the \textit{logarithmic norm}) of a matrix $A \in \Reals^{d \times d}$ \citep[Section II.8]{Desoer1975feedback}: Let a norm $\| \cdot \|$ on $\Reals^d$ be given, and let $\| A \| \deq \sup \{ \| Av\| : \|v\|=1\}$ denote the corresponding induced norm of $A$. The measure of $A$ w.r.t.\ $\| \cdot\|$ is then defined as the right directional derivative of $\| \cdot \|$ at $I_d$ in the direction of $A$:
\begin{align*}
	\mu[A] \deq \lim_{\theta \searrow 0} \frac{\| I_d + \theta A\| - 1}{\theta}.
\end{align*}
The following result is key (see, e.g., \citep[p.~34]{Desoer1975feedback}):
\begin{lemma}[Coppel's inequality]\label{lm:Coppel} Let $A : [0,T] \to \Reals^{d \times d}$ be a continuous matrix-valued function. Then any solution of the time-inhomogeneous matrix ODE $\dot{\Lambda}(t) = A(t)\Lambda(t)$ for $0 \le t \le T$ satisfies
	\begin{align*}
		\| \Lambda(t) \| \le \|\Lambda(0)\| \exp\Bigg(\int^t_0 \mu[A(s)]\dif s\Bigg)
	\end{align*}
	where $\mu[A]$ is the measure of $A$ w.r.t.\ the induced norm $\|\cdot\|$.
\end{lemma}
\noindent Since $\mu[A]$ can be positive, negative, or zero, Lemma~\ref{lm:Coppel} is a strict improvement over the usual estimates based on Gr\"onwall's inequality. We will apply it to the case when $\|\cdot\|$ is the spectral norm, i.e., the matrix norm induced by the Euclidean norm $|\cdot|$, so that
$$
\mu[A] = \lambda_{\max}\Big(\frac{A+A^\tp}{2}\Big)
$$
\citep[p.~33]{Desoer1975feedback}. Now fix some $\bar{x} \in \Reals^d$. Since $(\varphi_t)_*(\bar{x})$ is the solution of the variational equation \eqref{eq:var_params} with initial condition $\Lambda(0)=I_d$, Lemma~\ref{lm:Coppel} gives
\begin{align*}
	\| (\varphi_t)_*(\bar{x})\| &\le \exp\left(\int^t_0 \mu[f_*(\varphi_s(\bar{x})] \dif s\right) \\
	&\le \exp\Big(t\sup_{0 \le s \le t} \mu[f_*(\varphi_s(\bar{x})]\Big) \\
	&= \exp\left(t \sup_{0 \le s \le t} \lambda_{\max}\left(\frac{f_*(\varphi_s(\bar{x}))+f_*(\varphi_s(\bar{x}))^\tp}{2}\right)\right) \\
	&\le e^{M(f)t},
\end{align*}
where we have invoked Assumption~\ref{as:matmeas} at the end. Now we estimate $S_T(f)$. When $M(f) \neq 0$,
\begin{align*}
	S_T(f) \le \int^T_0 \sup_{\bar{x}\in\Reals^d} \| (\varphi_{T-t})_*(\bar{x})\|^2 \dif t \le \int^T_0 e^{2M(f)(T-t)}\dif t = \frac{e^{2M(f)T}-1}{2M(f)}.
\end{align*}
When $M(f) = 0$ (e.g., when the Jacobians $f_*(\bar{x})$ are skew-symmetric), $\|(\varphi_t)_*(\bar{x})\| \le 1$ for all $t \ge 0$ and $\bar{x}$, so $S_T(f) \le T$.
\end{proof}
\noindent We should point out that the lower bound of Proposition~\ref{prop:I_lower_bound} did not need any structural properties of the candidate control $u(\cdot)$, other than that it transfers $x$ to $y$ in time $T$. It should be possible to obtain tighter bounds by exploiting the fact that any \textit{optimal} control arises from a Hamiltonian system by Pontryagin's maximum principle \citep{Bonnard2003singular}. We leave this for future work.

\section{Function representation by neural SDEs}
\label{sec:neural_SDEs}

We are now in a position to look at function representation by neural SDEs. As alluded to earlier,  with ``non-Gaussian'' expressions or estimates for the transition density $p_T(x,y)$: (a) we can pinpoint the source of nonlinear dependence of the function $F$ on its input $x$ and (b) we can obtain explicit bounds on the rate of approximation of $F$ by finite sums via Maurey's empirical method.

We begin with the first of these. The key point here, which is primarily qualitative, is that the dominant contribution of the minimum action $I_T(x,y)$ to $p_T(x,y)$ suggests that we should formally think of the function $F$ realized by the neural SDE model
\begin{align}\label{eq:neural_SDE}
	\begin{split}
	\dif X_t &= f(X_t) \dif t + g(X_t) \dif W_t, \qquad X_0 = x \\
	F(x) &= \Exp[\langle \alpha, X_T \rangle | X_0 = x].
	\end{split}
\end{align}
as approximated by expressions like
\begin{align*}
	\hat{F}(x) = C_1\int_{\Reals^d} \langle \alpha, y \rangle \exp\left(-C_2 I_T(x,y)\right)\dif y,
\end{align*}
where the constants $C_1,C_2$ depend on various model parameters; this approximation can be sharpened in appropriate regimes by applying asymptotic expansions for $p_T(x,y)$, such as Eq.~\eqref{eq:Kifer} due to \citet{Kifer76density}.  To a rough first approximation, we can think of $F(x)$ in terms of sampling from a Gaussian density with mean $\varphi_T(x)$ and the inverse of the covariance matrix given by the controllability Gramian of the linearization of the deterministic system $\dot{x}(t) = f(x(t)) + g(x(t))u(t)$ along the trajectory generated by zero control $u(\cdot) \equiv 0$ (this controllability Gramian is referred to as the ``Malliavin covariance matrix'' in \citet{Bismut84Malliavin} due to the deep links between these ideas and Malliavin calculus). Here, as we recall, $\varphi_T(x)$ is the flow map of the autonomous ODE $\dot{x}(t) = f(x(t))$, i.e., just the deterministic contribution of the drift. Moreover, we can iterate the method of Section~\ref{ssec:var_param} to further express $I_T(x,y)$ in terms of $\frac{1}{2}|y-\varphi_T(x)|^2$ plus higher-order terms (see, e.g., \citet{lesiak72volterra}). We can also appeal to recent results on the expressive power of neural ODEs \citep{li2019deep,tabuada2021universal} in order to isolate the contribution of the drift to the expressiveness of neural SDEs. We plan to explore these directions in future work.

Moving on to the rates of approximation, we instantiate Maurey's empirical method by Monte Carlo simulation, i.e., generating $N$ independent copies $X^{x,1}_T,\ldots,X^{x,N}_T$ of $X_T$ in \eqref{eq:neural_SDE} and then building the finite approximation $\hat{F}_N(x) = N^{-1}\sum^N_{i=1} \langle \alpha, X^{x,i}_T \rangle$. Since each $X^{x,i}_T$ is an independent sample from $p_T(x,\cdot)$, for any probability distribution $\pi$ on $\Reals^d$ such that $\varphi_T(\cdot) \in L^2(\pi,\Reals^d)$, we have
\begin{align*}
	\Exp \| F - \hat{F}_N \|^2_{L^2(\pi)} = \int_{\Reals^d} \Exp\Bigg[\Bigg( \langle \alpha, X_T \rangle - \frac{1}{N}\sum^N_{i=1}\langle \alpha, X^{x,i}_T \rangle \Bigg)^2\Bigg] \pi(\dif x) = \frac{V_\pi(F)}{N}, 
\end{align*}
where
\begin{align*}
V_\pi(F) = \int_{\Reals^d}{\rm var}[\langle \alpha, X_T \rangle |X_0 = x]\pi(\dif x).
\end{align*}
We can then estimate $V_\pi(F)$ as follows:

\begin{theorem} Suppose $f$ and $g$ satisfy Assumption~\ref{as:Sheu}. Then
	\begin{align}\label{eq:Maurey_bound}
		V_\pi(F) \le k_2 |\alpha|^2\left(\frac{\lambda_1}{c_2\lambda_0} \frac{S_T(f)}{T}\right)^{d/2} \left(\frac{\lambda_1 d}{c_2}S_T(f) + \int_{\Reals^d} |\varphi_T(x)|^2 \pi(\dif x)\right).
	\end{align}
\end{theorem}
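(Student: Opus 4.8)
The plan is to control the integrand ${\rm var}[\langle\alpha,X_T\rangle\,|\,X_0=x]$ pointwise in $x$ by reducing it to a Gaussian-type finite-dimensional integral against the transition density $p_T(x,\cdot)$, and then to bound that integral using Sheu's upper estimate (Theorem~\ref{thm:Sheu}) together with the minimum-action lower bound of Proposition~\ref{prop:I_lower_bound}. Integrating the resulting pointwise bound against $\pi$ then gives \eqref{eq:Maurey_bound}.

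First I would bound the variance by the second moment and strip off $\alpha$: since ${\rm var}[\langle\alpha,X_T\rangle\,|\,X_0=x]\le\Exp[\langle\alpha,X_T\rangle^2\,|\,X_0=x]$ and $\langle\alpha,y\rangle^2\le|\alpha|^2|y|^2$ by Cauchy--Schwarz,
\begin{align*}
	{\rm var}[\langle\alpha,X_T\rangle\,|\,X_0=x]\le|\alpha|^2\int_{\Reals^d}|y|^2\,p_T(x,y)\,\dif y.
\end{align*}
Into this I would insert the upper bound from \eqref{eq:Sheu_bounds}, use $\det a(y)\ge\lambda_0^d$ (uniform ellipticity), and then replace $I_T(x,y)$ by the lower bound $\frac{1}{2\lambda_1 S_T(f)}|y-\varphi_T(x)|^2$ of \eqref{eq:I_lower_bound}; here $S_T(f)$ is finite under Assumption~\ref{as:Sheu} alone, since Lipschitzness of $f$ and Gr\"onwall's inequality give $\|(\varphi_t)_*(\bar x)\|\le e^{L_f t}$ uniformly in $\bar x$. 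This yields, with $\beta\deq\frac{c_2}{2\lambda_1 S_T(f)}$,
\begin{align*}
	\int_{\Reals^d}|y|^2\,p_T(x,y)\,\dif y\le\frac{k_2}{(2\pi T\lambda_0)^{d/2}}\int_{\Reals^d}|y|^2\exp\bigl(-\beta|y-\varphi_T(x)|^2\bigr)\dif y.
\end{align*}

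Next I would evaluate the Gaussian integral and collect constants. Substituting $z=y-\varphi_T(x)$ and expanding $|y|^2=|z|^2+2\langle z,\varphi_T(x)\rangle+|\varphi_T(x)|^2$, the cross term integrates to zero by oddness, and the elementary identities $\int_{\Reals^d}e^{-\beta|z|^2}\dif z=(\pi/\beta)^{d/2}$ and $\int_{\Reals^d}|z|^2 e^{-\beta|z|^2}\dif z=\frac{d}{2\beta}(\pi/\beta)^{d/2}$ give $\int_{\Reals^d}|y|^2 e^{-\beta|y-\varphi_T(x)|^2}\dif y=(\pi/\beta)^{d/2}\bigl(\frac{d}{2\beta}+|\varphi_T(x)|^2\bigr)$. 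Substituting $\beta$ back, one has $(\pi/\beta)^{d/2}/(2\pi T\lambda_0)^{d/2}=(2\beta T\lambda_0)^{-d/2}=\bigl(\frac{\lambda_1}{c_2\lambda_0}\frac{S_T(f)}{T}\bigr)^{d/2}$ and $\frac{d}{2\beta}=\frac{\lambda_1 d}{c_2}S_T(f)$, so the pointwise bound becomes
\begin{align*}
	{\rm var}[\langle\alpha,X_T\rangle\,|\,X_0=x]\le k_2|\alpha|^2\left(\frac{\lambda_1}{c_2\lambda_0}\frac{S_T(f)}{T}\right)^{d/2}\left(\frac{\lambda_1 d}{c_2}S_T(f)+|\varphi_T(x)|^2\right).
\end{align*}
Integrating against $\pi$ and pulling the $x$-independent factor out of the integral produces exactly \eqref{eq:Maurey_bound}.

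The argument is essentially a computation, so the only ``obstacle'' is bookkeeping: one must track the $d$-th powers of $2\pi T$, $\lambda_0$, $\lambda_1$, $c_2$ and $S_T(f)$ so that they coalesce into the single factor $\bigl(\frac{\lambda_1}{c_2\lambda_0}\frac{S_T(f)}{T}\bigr)^{d/2}$, and it matters that one expands $|y|^2$ rather than $|y-\varphi_T(x)|^2$ \emph{before} integrating --- exploiting that $\varphi_T(x)$ is precisely the centre of the Gaussian weight --- so that the additive term $\int_{\Reals^d}|\varphi_T(x)|^2\pi(\dif x)$ appears with no spurious constant. Sharper, fully explicit control of $S_T(f)$ is available from Proposition~\ref{prop:ST_bound} under Assumption~\ref{as:matmeas}, but it is not needed for the bound as stated.
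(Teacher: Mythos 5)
Your proposal is correct and follows essentially the same route as the paper's proof: bound the variance by the second moment, apply Cauchy--Schwarz, insert Sheu's upper bound with $\det a(y)\ge\lambda_0^d$, replace $I_T(x,y)$ by the lower bound of Proposition~\ref{prop:I_lower_bound}, and evaluate the resulting Gaussian integral before integrating against $\pi$. The only difference is that you spell out the Gaussian computation and the finiteness of $S_T(f)$ explicitly, both of which the paper leaves implicit, and your bookkeeping of the constants is accurate.
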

\begin{proof} Using the upper bound in Theorem~\ref{thm:Sheu} and the lower bound in Proposition~\ref{prop:I_lower_bound}, we have
	\begin{align*}
		{\rm var}[\langle \alpha, X_T \rangle |X_0 = x] &\le |\alpha|^2 \int_{\Reals^d} |y|^2 p_T(x,y) \dif y \\
		&\le \frac{k_2|\alpha|^2}{\sqrt{(2\pi \lambda_0 T)^d}}\int_{\Reals^d} |y|^2 \exp\Big(-c_2 I_T(x,y)\Big)\dif y \\
		&\le \frac{k_2|\alpha|^2}{\sqrt{(2\pi \lambda_0 T)^d}}\int_{\Reals^d} |y|^2 \exp\Big(-\frac{c_2}{2\lambda_1 S_T(f)}|y-\varphi_T(x)|^2\Big)\dif y 
	\end{align*}
Evaluating the Gaussian integral and taking expectations w.r.t.\ $\pi$, we get \eqref{eq:Maurey_bound}.
\end{proof}
\noindent If Assumption~\ref{as:matmeas} is also in force, then one can substitute the bounds of Proposition~\ref{prop:ST_bound} for $S_T(f)$. Then $V_\pi(F)$ may or may not scale exponentially with dimension $d$ and the time horizon $T$ depending on the stability properties of the autonomous ODE $\dot{x}(t) = f(x(t))$. Just as in the case of feedforward neural nets \citep{barron1993universal,Ji2020Neural}, this exponential scaling is to be generally expected, apart from various special cases, such as stable linear and nonlinear systems.

\subsection{Example: stochastic recurrent neural nets}

We illustrate the above results in the particular case when
\begin{align*}
	f(x) = - \frac{x}{\tau} + A \boldsymbol{\sigma}(x), \qquad g(x) = cI_d
\end{align*}
where $\tau,c$ are given positive constants, $A \in \Reals^{d \times d}$ is a given matrix, and
$$
\boldsymbol{\sigma}(x) \deq \big( \sigma(x_1), \sigma(x_2), \ldots, \sigma_d(x)\big)^\tp
$$
is a diagonal map constructed from a sigmoidal scalar nonlinearity $\sigma : \Reals \to \Reals$ with $0 \le \sigma'(r) \le \gamma$ for all $r \in \Reals$.  This corresponds to a stochastic recurrent neural net model
\begin{align*}
	\dif X_t &= \left(-\frac{X_t}{\tau} + A\boldsymbol{\sigma}(X_t)\right) \dif t + c \dif W_t, \qquad X_0 = x,\, 0 \le t \le T \\
	F(x) &= \Exp[\langle \alpha, X_T \rangle | X_0 = x].
\end{align*}
It is easy to see that Assumption~\ref{as:Sheu} is satisfied, and $\lambda_0 = \lambda_1 = c^2$.
To verify Assumption~\ref{as:matmeas}, we first write down the Jacobian of $f$:
\begin{align*}
	f_*(x) = -\frac{1}{\tau}I_d + A\, {\rm diag}(\boldsymbol{\sigma}'(x)), \qquad \boldsymbol{\sigma}'(x) \deq \big(\sigma'(x_1),\ldots,\sigma'(x_d)\big)^\tp.
\end{align*}
Following \citet[Section~8]{Hirsch89neural}, we can use the Ger\v{s}gorin disc theorem \citep[p.~244]{Bhatia97matrix} to estimate the eigenvalues of $\frac{1}{2}[f_*(x)+f_*(x)^\tp]$:
\begin{align*}
	M(f) = \sup_{x \in \Reals^d}\lambda_{\max}\left(\frac{f_*(x)+f_*(x)^\tp}{2}\right) \le -\frac{1}{\tau} + \gamma \max_{1 \le i \le d} \Bigg(A_{ii} + \frac{1}{2}\sum_{j:\, j \neq i}(|A_{ij}|+|A_{ji}|)\Bigg).
\end{align*}
For example, if $A_{ii} \le \kappa$, $|A_{ij}| \le \beta$ for all $i,j$ and if every neuron is connected to at most $m \le d$ other neurons, then we can take $M(f) = - \frac{1}{\tau} + \gamma(\kappa + m \beta)$, so different scalings of $S_T(f)$ with $T$ can be achieved by varying the relative magnitudes of the net parameters.

\section*{Acknowledgments}
This work was supported in part by the Illinois Institute for Data Science and Dynamical Systems (iDS$^2$), an NSF HDR TRIPODS institute, under award CCF-1934986.

\bibliography{neural_SDE_bounds.bbl}

\begin{thebibliography}{37}
\providecommand{\natexlab}[1]{#1}
\providecommand{\url}[1]{\texttt{#1}}
\expandafter\ifx\csname urlstyle\endcsname\relax
  \providecommand{\doi}[1]{doi: #1}\else
  \providecommand{\doi}{doi: \begingroup \urlstyle{rm}\Url}\fi

\bibitem[Ara{\'u}jo et~al.(2019)Ara{\'u}jo, Oliveira, and
  Yukimura]{araujo2019mean}
Dyego Ara{\'u}jo, Roberto~Imbuzeiro Oliveira, and Daniel Yukimura.
\newblock A mean-field limit for certain deep neural networks, 2019.
\newblock URL \url{http://arxiv.org/abs/1906.00193}.

\bibitem[Barron(1993)]{barron1993universal}
Andrew~R. Barron.
\newblock Universal approximation bounds for superpositions of a sigmoidal
  function.
\newblock \emph{IEEE Transactions on Information Theory}, 39\penalty0
  (3):\penalty0 930--945, 1993.

\bibitem[Barron and Klusowski(2018)]{Barron:2018ty}
Andrew~R. Barron and Jason~M. Klusowski.
\newblock Approximation and estimation for high-dimensional deep learning
  networks, 2018.
\newblock URL \url{https://arxiv.org/abs/1809.03090}.

\bibitem[Bhatia(1997)]{Bhatia97matrix}
Rajendra Bhatia.
\newblock \emph{Matrix Analysis}.
\newblock Springer, 1997.

\bibitem[Bismut(1984)]{Bismut84Malliavin}
Jean-Michel Bismut.
\newblock \emph{Large Deviations and the Malliavin Calculus}.
\newblock Birkh\"auser, 1984.

\bibitem[Bonnard and Chyba(2003)]{Bonnard2003singular}
Bernard Bonnard and Monique Chyba.
\newblock \emph{Singular Trajectories and their Role in Optimal Control
  Theory}.
\newblock Springer, 2003.

\bibitem[Brockett(1970)]{Brockett1970llinear}
Roger~W. Brockett.
\newblock \emph{Finite-Dimensional Linear Systems}.
\newblock Wiley, 1970.

\bibitem[Brockett(1976)]{Brockett76nonlinear}
Roger~W. Brockett.
\newblock Nonlinear systems and differential geometry.
\newblock \emph{Proceedings of the {IEEE}}, 64\penalty0 (1):\penalty0 61--72,
  January 1976.

\bibitem[Chen et~al.(2018)Chen, Rubanova, Bettencourt, and
  Duvenaud]{chen2018neural}
Ricky~TQ Chen, Yulia Rubanova, Jesse Bettencourt, and David Duvenaud.
\newblock Neural ordinary differential equations.
\newblock In \emph{{NeurIPS}}, 2018.

\bibitem[Clark(1973)]{Clark73SDEs}
J.~M.~C. Clark.
\newblock An introduction to stochastic differential equations on manifolds.
\newblock In D.~Q. Mayne and R.~W. Brockett, editors, \emph{Geometric Methods
  in System Theory}, pages 131--149. Reidel, Dordrecht, Holland, 1973.

\bibitem[Desoer and Vidyasagar(1975)]{Desoer1975feedback}
Charles~A. Desoer and M.~Vidyasagar.
\newblock \emph{Feedback Systems: Input-Output Properties}.
\newblock Academic Press, 1975.

\bibitem[E(2017)]{Weinan2017dynamical}
Weinan E.
\newblock A proposal on machine learning via dynamical systems.
\newblock \emph{Communications in Mathematics and Statistics}, 5:\penalty0
  1--11, 2017.

\bibitem[E et~al.(2022)E, Ma, and Wu]{weinan2019barron}
Weinan E, Chao Ma, and Lei Wu.
\newblock The {Barron} space and the flow-induced function spaces for neural
  network models.
\newblock \emph{Constructive Approximation}, 55:\penalty0 369--406, 2022.

\bibitem[Elliott(1969)]{Elliott69thesis}
David~L. Elliott.
\newblock \emph{Controllable Nonlinear Systems Driven by White Noise}.
\newblock PhD thesis, UCLA, 1969.

\bibitem[Fleming(1983)]{Fleming83mechanics}
Wendell~H. Fleming.
\newblock Stochastic calculus of variations and mechanics.
\newblock \emph{Journal of Optimization Theory and Applications}, 41\penalty0
  (1):\penalty0 55--74, 1983.

\bibitem[Guerra and Morato(1983)]{Guerra:1983gd}
Francesco Guerra and Laura Morato.
\newblock Quantization of dynamical systems and stochastic control theory.
\newblock \emph{Physical Review D}, 27\penalty0 (8):\penalty0 1774--1786, 1983.

\bibitem[Gurvits and Koiran(1997)]{Gurvits97approximation}
Leonid Gurvits and Pascal Koiran.
\newblock Approximation and learning of convex superpositions.
\newblock \emph{Journal of Computer and System Sciences}, 55:\penalty0
  161--170, 1997.

\bibitem[Haber and Ruthotto(2017)]{Haber_2018}
Eldad Haber and Lars Ruthotto.
\newblock Stable architectures for deep neural networks.
\newblock \emph{Inverse Problems}, 34\penalty0 (1):\penalty0 014004, December
  2017.

\bibitem[Hairer et~al.(1993)Hairer, N{\o}rsett, and Wanner]{Hairer1993ODEs}
E.~Hairer, S.~P. N{\o}rsett, and G.~Wanner.
\newblock \emph{Solving Ordinary Differential Equations {I}: Nonstiff
  Problems}.
\newblock Springer, 1993.

\bibitem[Hirsch(1989)]{Hirsch89neural}
Morris~W. Hirsch.
\newblock Convergent activation dynamics in continuous time networks.
\newblock \emph{Neural Networks}, 2:\penalty0 331--349, 1989.

\bibitem[Ji et~al.(2020)Ji, Telgarsky, and Xian]{Ji2020Neural}
Ziwei Ji, Matus Telgarsky, and Ruicheng Xian.
\newblock Neural tangent kernels, transportation mappings, and universal
  approximation.
\newblock In \emph{ICLR}, 2020.

\bibitem[Kifer(1976)]{Kifer76density}
Yuri~I. Kifer.
\newblock On the asymptotics of the transition density of processeds with small
  diffusion.
\newblock \emph{Theory of Probability and Its Applications}, {XXI}\penalty0
  (3):\penalty0 513--522, 1976.

\bibitem[Lesiak and Krener(1978)]{lesiak72volterra}
C.~Lesiak and A.~Krener.
\newblock The existence and uniqueness of {V}olterra series for nonlinear
  systems.
\newblock \emph{IEEE Transactions on Automatic Control}, 23\penalty0
  (6):\penalty0 1090--1095, 1978.

\bibitem[Li et~al.(2019)Li, Lin, and Shen]{li2019deep}
Qianxiao Li, Ting Lin, and Zuowei Shen.
\newblock Deep learning via dynamical systems: An approximation perspective.
\newblock \emph{arXiv preprint arXiv:1912.10382}, 2019.

\bibitem[Li et~al.(2020)Li, Wong, Chen, and Duvenaud]{pmlr-v108-li20i}
Xuechen Li, Ting-Kam~Leonard Wong, Ricky T.~Q. Chen, and David Duvenaud.
\newblock Scalable gradients for stochastic differential equations.
\newblock In \emph{AISTATS}, 2020.

\bibitem[Li and Canny(1993)]{LiCanny1993nonholonomic}
Zexiang Li and John~F. Canny, editors.
\newblock \emph{Nonholonomic Motion Planning}.
\newblock Springer, 1993.

\bibitem[Peluchetti and Favaro(2020)]{pmlr-v108-peluchetti20a}
Stefano Peluchetti and Stefano Favaro.
\newblock Infinitely deep neural networks as diffusion processes.
\newblock In \emph{AISTATS}, 2020.

\bibitem[Piccoli and Sussmann(2000)]{Piccoli_2000}
Benedetto Piccoli and H{\'{e}}ctor~J. Sussmann.
\newblock Regular synthesis and sufficiency conditions for optimality.
\newblock \emph{{SIAM} Journal on Control and Optimization}, 39\penalty0
  (2):\penalty0 359--410, January 2000.

\bibitem[Sheu(1991)]{sheu1991some}
Shuenn-Jyi Sheu.
\newblock Some estimates of the transition density of a nondegenerate diffusion
  markov process.
\newblock \emph{The Annals of Probability}, pages 538--561, 1991.

\bibitem[Sonoda and Murata(2019)]{sonoda2019transport}
Sho Sonoda and Noboru Murata.
\newblock Transport analysis of infinitely deep neural network.
\newblock \emph{Journal of Machine Learning Research}, 20\penalty0
  (2):\penalty0 1--52, 2019.

\bibitem[Sontag(1998)]{Sontag1998control}
Eduardo~D. Sontag.
\newblock \emph{Mathematical Control Theory: Deterministic Finite Dimensional
  Systems}.
\newblock Springer, 2nd edition, 1998.

\bibitem[Stroock(2008)]{Stroock2008PDEs}
Daniel~W. Stroock.
\newblock \emph{An Introduction to Partial Differential Equations for
  Probabilists}.
\newblock Cambridge University Press, 2008.

\bibitem[Tabuada and Gharesifard(2021)]{tabuada2021universal}
Paulo Tabuada and Bahman Gharesifard.
\newblock Universal approximation power of deep neural networks via nonlinear
  control theory.
\newblock In \emph{ICLR}, 2021.

\bibitem[Tzen and Raginsky(2019{\natexlab{a}})]{Tzen2019TheoreticalGF}
Belinda Tzen and Maxim Raginsky.
\newblock Theoretical guarantees for sampling and inference in generative
  models with latent diffusions.
\newblock In \emph{COLT}, 2019{\natexlab{a}}.

\bibitem[Tzen and Raginsky(2019{\natexlab{b}})]{tzen2019neural}
Belinda Tzen and Maxim Raginsky.
\newblock Neural stochastic differential equations: Deep latent gaussian models
  in the diffusion limit, 2019{\natexlab{b}}.
\newblock URL \url{http://arxiv.org/abs/1905.09883}.

\bibitem[Wong(1991)]{Wong_1991}
Eugene Wong.
\newblock Stochastic neural networks.
\newblock \emph{Algorithmica}, 6:\penalty0 466--478, June 1991.

\bibitem[Yukich et~al.(1995)Yukich, Stinchcombe, and White]{Yukich95universal}
Joseph~E. Yukich, Maxwell~B. Stinchcombe, and Halbert White.
\newblock Sup-norm approximation bounds for networks through probabilistic
  methods.
\newblock \emph{IEEE Transactions on Information theory}, 41\penalty0
  (4):\penalty0 1021--1027, July 1995.

\end{thebibliography}

\end{document}